\title{Baker domain for a transcendental skew product}
\newtheorem{thm}{Theorem}
\newtheorem{lemma}{Lemma}
\newtheorem*{remark}{Remark}
\author[R. Kaur]{Ramanpreet Kaur }
\address{Department of Mathematics, University of Delhi,
Delhi--110 007, India}
\email{preetmaan444@gmail.com}
\begin{document}
\title[Baker domain for a transcendental skew-product]{Baker domain for a transcendental skew-product}
\begin{abstract}
In this note, we demonstrate the existence of a Baker domain of a transcendental skew-product by constructing a domain that is shown to be absorbing using plurisubharmonic method.
\end{abstract}
\keywords{transcendental entire function, transcendental skew-product, Baker domain, plurisubharmonic}
\subjclass[2020]{30D05, 37F10}
\maketitle
\section{Introduction and Preliminaries}
This note is aimed at showing that the transcendental skew-product  $F:\mathbb{C}^2\to\mathbb{C}^2$ given by
\begin{equation}\label{1}
F(z,w)=(e^{-(z+w)}+z+w, e^{-2w}+2w+1)
\end{equation} has a Baker domain.
Firstly, recall that a polynomial skew-product on $\mathbb{C}^2$ is of the following form
\[F(z,w)=(p(z,w),q(w)), \text{ for }(z,w)\in\mathbb{C}^2.\]
Here, $q$ is polynomial in one variable and $p$ is a  polynomial in two variables. On similar lines, we define a transcendental skew-product, a mapping $F:\mathbb{C}^2\to\mathbb{C}^2$ of the form
$F(z,w)=(f(z,w), g(w))$, where $f(z,w)$ and $g(w)$ are transcendental entire functions. The Fatou set of $F$, denoted by $\mathscr{F}(F)$, is  the largest open set where the family of iterates $\{F^n\}_{n\in\mathbb{N}}$ of $F$ is normal \cite{beardon}. A connected component of the Fatou set is called a Fatou component. The complement of the Fatou set is called the Julia set. For a detailed discussion on the Fatou--Julia theory in several variables, see \cite{Fornaess,Fornaess2,Rudin} and the references therein.

For a rational function of degree $d\geq 2$ in one variable, every Fatou component is pre-periodic \cite{Sullivan}. In particular, the periodic Fatou component, say $U$, takes one of the forms: attracting, parabolic, Siegel disk and Herman ring. 
In contrast to this, a transcendental entire function does not have Herman rings. Indeed, it may have:
\begin{enumerate}
\item Baker domain: a Fatou component $U$ for which the orbit of every point of $U$ converges to $\infty$, an essential singularity \cite{Fatou}.
\item Wandering domain: a Fatou component $U$ that is not pre-periodic \cite{Baker1}. 
\end{enumerate}
These variations have led to the study of the classification of Fatou components in higher variables. Several authors have discussed about these components for maps of certain types, for example, recurrent Fatou components of a transcendental H\'enon map \cite{Arosio}, and Fatou components of attracting polynomial skew-products \cite{Peters}. 

In this note, we shall restrict ourselves to Baker domain corresponding to $F$ mentioned in \Cref{1}. The motivation of such a map comes from \cite{Arosio}, where the authors give an example of a transcendental H\'enon map having Baker domain. They used the plurisubharmonic method to show that for a particular  transcendental H\'enon map with Baker domain, there exists an absorbing domain. We shall also apply the same method to show that $F$ mentioned in \Cref{1} has 
\begin{itemize}
\item a Baker domain.
\item there exists an absorbing domain for such a Baker domain.
\end{itemize} 
In essence, we prove the following:
\begin{thm}\label{T1}
There exists a transcendental skew-product having Baker domain.
\end{thm}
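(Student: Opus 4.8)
The plan is to exploit the skew-product structure of $F$ together with the auxiliary quantity $s=z+w$. Writing $F(z,w)=(f(z,w),g(w))$ with $f(z,w)=e^{-(z+w)}+(z+w)$ and $g(w)=e^{-2w}+2w+1$, and setting $h(t)=t+e^{-t}$, one sees that the first coordinate is $f(z,w)=h(z+w)=h(s)$. Thus, writing $(z_n,w_n)=F^n(z,w)$ and $s_n=z_n+w_n$, an orbit satisfies the coupled recursion $w_{n+1}=g(w_n)$ and $s_{n+1}=h(s_n)+g(w_n)=h(s_n)+w_{n+1}$. First I would fix a large $R>0$ and take as candidate domain the convex (hence connected) open set
\[
\Omega_R=\{(z,w)\in\mathbb{C}^2:\ \operatorname{Re} w>R,\ \operatorname{Re}(z+w)>R\}.
\]
On $\Omega_R$ the exponentials are uniformly small, giving the elementary bounds $\operatorname{Re} g(w)\ge 2\operatorname{Re} w+1-e^{-2R}$ and $\operatorname{Re} h(s)\ge\operatorname{Re} s-e^{-R}$. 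Hence $\operatorname{Re} w_{n+1}\ge 2\operatorname{Re} w_n+1-e^{-2R}>R$ and $\operatorname{Re} s_{n+1}\ge\operatorname{Re} s_n-e^{-R}+\operatorname{Re} w_{n+1}>R$ for $R$ large, so that $F(\Omega_R)\subseteq\Omega_R$; that is, $\Omega_R$ is forward invariant.

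Next I would prove escape to infinity on $\Omega_R$. The estimate $\operatorname{Re} w_{n+1}\ge 2\operatorname{Re} w_n+1-e^{-2R}$ forces $\operatorname{Re} w_n\to+\infty$ geometrically, and then $\operatorname{Re} s_{n+1}\ge\operatorname{Re} s_n-e^{-R}+\operatorname{Re} w_{n+1}$ forces $\operatorname{Re} s_n\to+\infty$ as well. Since $z_n=s_n-w_n$, both coordinates of $F^n(z,w)$ leave every compact set, so $F^n(z,w)\to\infty$ for every $(z,w)\in\Omega_R$; moreover these bounds are uniform on compact subsets of $\Omega_R$, so the divergence is locally uniform.

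Finally I would upgrade escape to membership in the Fatou set and identify the Baker domain. Local uniform divergence of $\{F^n\}$ to infinity makes the family normal on $\Omega_R$ (the iterates converge locally uniformly to the point at infinity in $\mathbb{P}^2$), whence $\Omega_R\subseteq\mathscr{F}(F)$; being connected, $\Omega_R$ lies in a single Fatou component $U$, which is forward invariant since $F(\Omega_R)\subseteq\Omega_R$. Following the plurisubharmonic approach of \cite{Arosio}, I would then show that $\Omega_R$ is absorbing in $U$, i.e.\ every orbit in $U$ eventually enters $\Omega_R$, so that the convergence $F^n\to\infty$ on $\Omega_R$ propagates to all of $U$; consequently $U$ is a Baker domain and \Cref{T1} follows. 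I expect this last step to be the main obstacle: mere escape to infinity does not by itself yield normality or absorption in $\mathbb{C}^2$, and the plurisubharmonic method is what controls the iterates uniformly, prevents orbits from being dragged to the boundary, and establishes that $\Omega_R$ absorbs the entire component.
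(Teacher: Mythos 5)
Your invariant domain and escape estimates are sound (with one slip: from $\operatorname{Re}s_n\to\infty$ and $\operatorname{Re}w_n\to\infty$ you cannot conclude anything about $z_n=s_n-w_n$, since a difference of two divergent quantities need not diverge; what saves you is the inequality $\operatorname{Re}z_{n+1}=\operatorname{Re}h(s_n)\ge \operatorname{Re}s_n-e^{-R}$). The genuine gap is in your final step, and it is not where you locate it. In $\mathbb{P}^2$ there is no single ``point at infinity'': the complement of $\mathbb{C}^2$ is a whole line, and locally uniform divergence to infinity does \emph{not} give convergence in $\mathbb{P}^2$, nor normality in the sense relevant here, nor the defining property of a Baker domain, which (following the convention of Arosio et al.\ adopted in this paper) is that the iterates converge locally uniformly to a single point $[p:q:0]$ of the line at infinity that is an essential singularity. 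So after the escape estimates, the needed --- and missing --- step is \emph{directional} control of the orbit. This is exactly the content of the paper's \Cref{l1}: from the identity $w_{n+1}-z_{n+1}=(w_n-z_n)+1+e^{-2w_n}-e^{-(z_n+w_n)}$ one sees that $w_n-z_n$ grows only linearly in $n$, while $|z_n|,|w_n|$ grow geometrically, so the ratio of the coordinates converges and $[z_n:w_n:1]$ tends, uniformly on compact sets, to one fixed point of the line at infinity. Together with the introduction's verification that the relevant point at infinity is an essential singularity, this is what places the invariant domain inside the Fatou set and makes its component a Baker domain; none of it appears in your proposal.

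Moreover, the tool you reserve for the last step --- the plurisubharmonic/absorption argument --- cannot fill this gap and is not needed for \Cref{T1} at all. In the paper, absorption (Lemmas \ref{l2}--\ref{l4}) is proved \emph{after} Theorem 1, as a separate refinement describing $U$; the proof of Theorem 1 uses only \Cref{l1}: once $F^n$ converges locally uniformly to a point of the line at infinity on the open set $L\subseteq U$, normality of the iterates on $U$ together with the identity principle applied to subsequential limit maps $U\to\mathbb{P}^2$ forces the same convergence on all of $U$, so $U$ is a Baker domain without knowing that $L$ is absorbing. Conversely, the plurisubharmonic argument presupposes that $U$ is already known to be a Fatou component (the functions $u_n$ are defined on $U$ and the contradiction is obtained at boundary points of $A$ inside $U$), so it cannot be used to establish normality in the first place. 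In short: your plan proves escape but not normality, and the tool you invoke to repair this is the one step of the paper that is logically downstream of the theorem rather than upstream of it.
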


To demonstrate Baker domain of a transcendental skew-product, first we need the definition of essential singularity at infinity for the same. We also define an essential singularity at infinity of a map $F:\mathbb{C}^2\to\mathbb{C}^2$ as it is defined for transcendental H\'enon map \cite{Arosio}, that is, $[p:q:0]$ is an essential singularity at infinity if for any point $[s:t:0]$ there exists a sequence $(z_n,w_n)\in\mathbb{C}^2$ converging to $[s:t:0]$ such that $F(z_n,w_n)$ converges to  $[p:q:0]$.

Our first observation is that $[2:1:0]$ is an essential singularity for $F$. To see this, take 
\[h(\zeta)=\frac{e^{-3\zeta}+3\zeta-1}{4\zeta}.\]
It can be easily seen that $h$ is a transcendental entire function. Clearly, there exists a sequence $\{\zeta_n\}$ converging to infinity such that $h(\zeta_n)\to\frac{s}{t}$. Now, we have 
\begin{align*}
F(\zeta_n, 2\zeta_n)&=(e^{-3\zeta_n}+3\zeta_n, e^{-4\zeta_n}+4\zeta_n+1)\\
&=(h(\zeta_n)4\zeta_n+1, e^{-4\zeta_n}+4\zeta_n+1)\\
&\to[s:t:0].
\end{align*}
\section{The construction of Baker domain}
As a first step towards Baker domain, let us first define an invariant domain for $F$. For each $\alpha>1$, let
\[L_{\alpha}:=\{(z,w)\in\mathbb{C}^2: \operatorname{Re}w>\operatorname{Re}z+\alpha, \operatorname{Re}z>1 , \operatorname{Re}w> 1\}.\]
 We shall first observe that $L_{\alpha}$ is an invariant set for each $\alpha>1$. Let $(z_0,w_0)\in L_{\alpha}$, that is, 
$\operatorname{Re}w_0> \operatorname{Re}z_0+\alpha$. We have to show that $F(z_0,w_0)=(z_1,w_1)\in L_{\alpha}$. Consider 
\begin{align*}
\operatorname{Re}w_1-\operatorname{Re}z_1&= \operatorname{Re}(e^{-2w_0}+2w_0+1)-\operatorname{Re}(e^{-(z_0+w_0)}+z_0+w_0)\\
&=\operatorname{Re}w_0-\operatorname{Re}z_0+1+\operatorname{Re}e^{-2w_0}-\operatorname{Re}e^{-(z_0+w_0)}\\
&\geq \alpha+1-e^{-2\operatorname{Re}w_0}-e^{-\operatorname{Re}(z_0+w_0)}\\
&\geq \alpha\quad \quad(\text{because }\operatorname{Re}z_0>1 \text{ and }\operatorname{Re}w_0>1)
\end{align*}
This shows that $F(z_0,w_0)=(z_1,w_1)\in L_{\alpha}$, that is, $L_{\alpha}$
is an invariant domain. 

In particular, 
\[L=\bigcup\limits_{\alpha>1}L_{\alpha}\]
is an invariant set.
\begin{remark}
If $L_{\alpha}$ is chosen as above, then  we have $\operatorname{Re}w_n>2 \operatorname{Re}w_0+\frac{n}{2}$ and $\operatorname{Re}z_n> \operatorname{Re}z_0+\frac{n}{2}$, for every $n\in\mathbb{N}$ (Note that $(z_n,w_n)=F^n(z_0,w_0)$, for every $n\in\mathbb{N}$). This, in particular implies that $|w_n|$ and $|z_n|$ converges to infinity uniformly on compact subsets of $L$.
\end{remark}
\begin{lemma}\label{l1}
The iterates of $F$ converges uniformly on compact subsets of $L$ to $[2:1:0]$.
\end{lemma}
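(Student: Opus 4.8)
The plan is to pass to homogeneous coordinates on $\mathbb{P}^2$, writing a finite point $(z,w)\in\mathbb{C}^2$ as $[z:w:1]=[z/w:1:1/w]$. Convergence $F^n(z_0,w_0)\to[2:1:0]$ then splits into two assertions: that the last coordinate $1/w_n\to0$, and that the ratio $z_n/w_n$ tends to the value encoded by $[2:1:0]$. The first is immediate from the Remark, since $\operatorname{Re}w_n>2\operatorname{Re}w_0+\tfrac{n}{2}$ forces $|w_n|\to\infty$, uniformly on compact subsets of $L$. Hence the whole lemma reduces to controlling the ratio $z_n/w_n$ and showing that this control is uniform on compacta.

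For the ratio I would analyse the two scalar recurrences governing the orbit. Writing $s_n=z_n+w_n$, the dynamics read $w_{n+1}=2w_n+1+e^{-2w_n}$ and $z_{n+1}=s_n+e^{-s_n}=z_n+w_n+e^{-s_n}$. Because $\operatorname{Re}w_n$ and $\operatorname{Re}s_n$ grow (linearly by the Remark, in fact geometrically), the perturbations $e^{-2w_n}$ and $e^{-s_n}$ are summably small. I would first solve the $w$-recurrence: the substitution $\tilde w_n=w_n+1$ turns it into $\tilde w_{n+1}=2\tilde w_n+e^{-2w_n}$, whence $\tilde w_n=2^n\bigl(\tilde w_0+\textstyle\sum_{k\ge0}2^{-1-k}e^{-2w_k}\bigr)+o(2^n)$, so $w_n\sim 2^nW$ for a finite constant $W$ depending on the orbit. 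I would then integrate the $z$-recurrence by summation, $z_n=z_0+\sum_{k=0}^{n-1}w_k+\sum_{k=0}^{n-1}e^{-s_k}$, insert the asymptotics for $w_k$, and compare growth rates; a direct computation then identifies $\lim_n z_n/w_n$ and, together with $1/w_n\to0$, pins the projective limit at $[2:1:0]$.

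The technical heart, and the step I expect to be the main obstacle, is making all of this uniform over a compact set $K\subset L$. The error terms are series of the form $\sum_k e^{-2\operatorname{Re}w_k}$ and $\sum_k e^{-\operatorname{Re}s_k}$, and I would dominate these uniformly on $K$ using the lower bounds $\operatorname{Re}w_n>2\operatorname{Re}w_0+\tfrac{n}{2}$ and $\operatorname{Re}z_n>\operatorname{Re}z_0+\tfrac{n}{2}$ from the Remark; these furnish a single summable majorant valid across $K$ and hence uniform convergence both of the constant $W$ and of the correction sums. Combining the resulting uniform asymptotics for $z_n$ and $w_n$ yields uniform convergence of $z_n/w_n$, and with $1/w_n\to0$ this gives the uniform convergence $[z_n:w_n:1]\to[2:1:0]$ on $K$, as required.
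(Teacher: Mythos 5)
Your framework is sound, but the proposal has a genuine gap at exactly its decisive step: the ``direct computation'' that is supposed to identify $\lim_n z_n/w_n$ is never performed, and if one performs it, it does not give the value the statement needs. From your own analysis, $\tilde w_n=w_n+1$ satisfies $\tilde w_{n+1}=2\tilde w_n+e^{-2w_n}$, so $w_n=2^nW+o(2^n)$ with $W=w_0+1+\sum_{k\ge 0}2^{-1-k}e^{-2w_k}$, and $W\neq 0$ because the series has modulus at most $e^{-2}$ while $\operatorname{Re}(w_0+1)>2$. Feeding this into $z_n=z_0+\sum_{k=0}^{n-1}w_k+\sum_{k=0}^{n-1}e^{-s_k}$ (the last sum is $O(1)$) gives $\sum_{k=0}^{n-1}w_k=(2^n-1)W+o(2^n)$, hence $z_n=2^nW+o(2^n)$ as well, and therefore
\[
\frac{z_n}{w_n}\longrightarrow 1 .
\]
In other words, your own asymptotics, carried to completion, show that both coordinates grow like $2^n$ with the \emph{same} leading coefficient, so the orbit converges to $[1:1:0]$, not to $[2:1:0]$. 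The closing assertion of your proposal is thus not a deferred routine verification but a claim that is false for this map, and the proposal cannot be completed into a proof of the statement as written.

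For comparison, the paper's own proof takes a shorter route: it derives by induction $w_n-z_n=w_0-z_0+n+\sum_{i=0}^{n-1}e^{-2w_i}-\sum_{i=0}^{n-1}e^{-(z_i+w_i)}$ and then asserts that $w_n/z_n\to 2$ uniformly. That inference fails for precisely the reason your computation exposes: a difference growing linearly in $n$ forces the ratio to tend to $1$, not $2$, once both coordinates grow like $2^n$. (Even granting the paper's claim $w_n/z_n\to 2$, the limit point would be $[1:2:0]$, not $[2:1:0]$, so the proof is also internally inconsistent with the statement.) So the obstacle you would have hit is not a defect of your strategy but an error in the lemma itself: on $L$ the iterates converge uniformly on compacta to $[1:1:0]$, and the statement (and the places downstream where $[2:1:0]$ is used, such as the essential-singularity discussion) would need to be corrected accordingly. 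Your route --- solving the $w$-recurrence, summing the $z$-recurrence, and dominating the error series uniformly on a compact $K\subset L$ via the Remark --- is the right way to pin down the actual limit.
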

\begin{proof}
Let $K$ be a compact subset of $L$, therefore, $K$ is contained in $L_{\alpha}$, for some $\alpha>0$. By induction we have 
\[w_n-z_n=w_0-z_0+n+\sum\limits_{i=0}^{n-1}e^{-2w_i}-\sum\limits_{i=0}^{n-1}e^{-(z_i+w_i)}.\]
Using the above remark and definition of $L_{\alpha}$, we have
 $\frac{w_n}{z_n}$ converges to $2$ uniformly on $K$.
\end{proof}
\begin{proof}[Proof of \Cref{T1}]
Using \Cref{l1}, $L$ is contained in an invariant Fatou component, say $U$. This, in particular tells us that $U$ is a Baker domain for $F$.
\end{proof}
 Now, we show that $L$ is an absorbing domain for $U$, that is, 
\[U=A:= \mathop{\cup}_{n\in\mathbb{N}}F^{-n}(L).\]

To prove this, we shall use the plurisubharmonic method \cite{Arosio}. Recall that a function $u:\Omega \subseteq_{\text{open}}\mathbb{C}^n\to\mathbb{R}$ is said to be plurisubharmonic if  
\begin{enumerate}
\item $u$ is upper semicontinuous  and is not identically $-\infty$ on any connected component of $\Omega$.
\item for each $a\in\Omega$ and $b\in\mathbb{C}^n$, the function $\lambda\mapsto u(a+\lambda b)$ is subharmonic or identically $-\infty$ on every component of the set 
$\{\lambda\in\mathbb{C}: a+\lambda b\in \Omega\}$.
\end{enumerate} 
In this case, we write $u\in\mathcal{PSH}(\Omega)$.
Now, we define a sequence of plurisubharmonic functions $u_n:U\to \mathbb{R}$ as 
\[u_n(z_0,w_0)=\frac{-(\operatorname{Re}w_n-\operatorname{Re}z_n)}{|w_n|+|z_n|}-1.\]
It can be easily seen that the functions $u_n$ are uniformly bounded from above by $0$, and hence $\limsup\limits_{n\to\infty}u_n\leq 0 $.

\begin{lemma}\label{l2}
If $H$ is a compact subset of $A$, then $\limsup\limits_{n\to\infty}u_n\leq -1 $ on $H$.
\end{lemma}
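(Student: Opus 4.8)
The plan is to exploit the forward invariance of $L$ together with a compactness argument, reducing the claim to the elementary observation that $u_n\le -1$ whenever the $n$-th iterate lands in $L$. The only genuine work is organizational: promoting the pointwise membership $H\subseteq A$ to a single uniform ``absorbing time'' $N$.

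First I would record two structural facts about $A$. Since each $L_{\alpha}$ is cut out by strict inequalities it is open, hence so is $L$, and consequently each preimage $F^{-n}(L)$ is open. Moreover, the invariance of $L$ established above yields the nesting $F^{-n}(L)\subseteq F^{-(n+1)}(L)$: if $F^{n}(z_0,w_0)\in L$ then $F^{n+1}(z_0,w_0)=F\bigl(F^{n}(z_0,w_0)\bigr)\in L$ as well. Thus $A=\bigcup_{n}F^{-n}(L)$ is an increasing union of open sets.

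Next, given a compact $H\subseteq A$, the family $\{F^{-n}(L)\}_{n}$ is an open cover of $H$; being nested and increasing, compactness furnishes a single index $N$ with $H\subseteq F^{-N}(L)$, and then invariance forces $F^{n}(H)\subseteq L$ for every $n\ge N$. At this stage I would evaluate $u_n$ directly: for $n\ge N$ and any $(z_0,w_0)\in H$ we have $(z_n,w_n)=F^{n}(z_0,w_0)\in L$, so $\operatorname{Re}w_n>\operatorname{Re}z_n$ and $|w_n|+|z_n|>0$. Hence the first summand in the definition of $u_n$ is strictly negative, giving $u_n(z_0,w_0)\le -1$. Since this bound holds uniformly for all $n\ge N$, we conclude $\limsup_{n\to\infty}u_n\le -1$ on $H$.

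The delicate point is precisely the passage from the pointwise statement $H\subseteq A$ to the uniform absorbing time $N$; this is where compactness is indispensable, and it in turn relies on the nesting of the preimages $F^{-n}(L)$, which rests on the forward invariance of $L$. Once $N$ is fixed the estimate $u_n\le -1$ is immediate from the defining inequality $\operatorname{Re}w>\operatorname{Re}z+\alpha$ of $L_{\alpha}$, so no further computation is needed.
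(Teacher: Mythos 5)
Your proof is correct and follows essentially the same route as the paper's: compactness of $H$ together with forward invariance of $L$ yields uniform entry of the orbits into $L$, after which the strict negativity of the first summand of $u_n$ gives $u_n\le -1$. The only difference is organizational — you use the nestedness $F^{-n}(L)\subseteq F^{-(n+1)}(L)$ to extract a single absorbing time $N$, whereas the paper covers $H$ by finitely many sets $F^{-n_i}(L_{\alpha_i})$ and carries the quantitative bound $-\alpha/(|w_n|+|z_n|)$; both arguments rest on the same mechanism and reach the same conclusion.
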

\begin{proof}
Let $(z_0,w_0)\in L_{\alpha}$, for some $\alpha>0$. Then, we have
$\operatorname{Re}w_n-\operatorname{Re}z_n>\alpha$, which means that 
\[\frac{-(\operatorname{Re}w_n-\operatorname{Re}z_n)}{|w_n|+|z_n|}<-\frac{\alpha}{|w_n|+|z_n|}.\] This further implies that 
\[\frac{-(\operatorname{Re}w_n-\operatorname{Re}z_n)}{|w_n|+|z_n|}-1<-\frac{\alpha}{|w_n|+|z_n|}-1.\] 
Hence, 
$\limsup\limits_{n\to\infty}u_n(z_0,w_0)\leq -1 $. Now, there exist a finite number of $\alpha$'s, say $\alpha_1, \alpha_2,\ldots, \alpha_k$, such that 
\[H\subset \bigcup\limits_{i=1}^{k}F^{-n_i}(L_{\alpha_{i}}).\]
Therefore, on $H$, we have $\limsup\limits_{n\to\infty}u_n\leq -1 $.
\end{proof}
\begin{lemma}\label{l3}
On $U\setminus A$, we have $ \limsup\limits_{n\to\infty}u_n=0$.
\end{lemma}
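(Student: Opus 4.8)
The plan is to run the plurisubharmonic maximum-principle argument whose two halves are Lemmas~\ref{l2} and~\ref{l3}. First I would pass to the upper semicontinuous regularization $u^{*}:=(\limsup_{n}u_{n})^{*}$. Since each $u_{n}$ is plurisubharmonic on $U$ and the family is uniformly bounded above by $0$, a standard fact of pluripotential theory gives $u^{*}\in\mathcal{PSH}(U)$ with $u^{*}\le 0$, and $u^{*}=\limsup_{n}u_{n}$ off a pluripolar set. By Lemma~\ref{l2}, $\limsup_{n}u_{n}\le -1$ throughout the open set $A$, and since this upper bound is local, $u^{*}\le -1$ on $A$ as well. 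Thus the whole question collapses to a lower bound: I must show $\limsup_{n}u_{n}\ge 0$, equivalently $u^{*}=0$, at each point of $U\setminus A$, recalling that $u_{n}\le 0$ always forces the reverse inequality automatically.

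For this lower bound I would argue dynamically. A point $(z_{0},w_{0})\in U\setminus A$ is precisely one whose orbit never meets $L$, so for every $n$ the pair $(z_{n},w_{n})$ violates the defining inequalities of $L=\bigcup_{\alpha>1}L_{\alpha}$; that is, for each $n$ one of $\operatorname{Re}w_{n}-\operatorname{Re}z_{n}\le 1$, $\operatorname{Re}z_{n}\le 1$, or $\operatorname{Re}w_{n}\le 1$ holds. Since $U$ is a Baker domain we also have $|z_{n}|+|w_{n}|\to\infty$. Using the one-step recursions $z_{n+1}=z_{n}+w_{n}+e^{-(z_{n}+w_{n})}$ and $w_{n+1}=2w_{n}+1+e^{-2w_{n}}$ to track the geometry of the orbit, I would try to extract a subsequence $n_{k}$ along which the persistent failure of membership in $L$ forces the configuration $\operatorname{Re}z_{n_{k}}-\operatorname{Re}w_{n_{k}}\to |z_{n_{k}}|+|w_{n_{k}}|$, i.e.\ $z_{n_{k}}$ becomes positive-real dominated while $w_{n_{k}}$ becomes negative-real dominated. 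That is exactly the assertion $u_{n_{k}}\to 0$, whence $\limsup_{n}u_{n}=0$ on $U\setminus A$.

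The step I expect to be the genuine obstacle is precisely this subsequence construction, and it is delicate for a structural reason: an orbit can avoid $L$ only by keeping its real parts from growing, and in that regime the exponential terms $e^{-2w_{n}}$ and $e^{-(z_{n}+w_{n})}$ are no longer negligible but instead dominate the iteration, so the orbit need not drift in any fixed direction. The crux is to show that the favourable times recur infinitely often for \emph{every} $L$-avoiding orbit and to rule out that $\limsup_{n}u_{n}$ is trapped strictly below $0$. Once Lemma~\ref{l3} is secured, the payoff is immediate and explains the formulation ``$=0$'': $u^{*}$ is plurisubharmonic on the connected set $U$ and attains its maximal value $0$ at each point of $U\setminus A$, so the strong maximum principle yields $u^{*}\equiv 0$ on $U$; but $u^{*}\le -1$ on the nonempty open set $A$, a contradiction unless $U\setminus A=\varnothing$, that is, $U=A$.
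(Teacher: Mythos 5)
Your proposal does not prove the lemma: everything rests on the ``subsequence construction'' of your second paragraph, which you yourself flag as the genuine obstacle and never carry out. Moreover, that step is not merely delicate but misdirected. Avoiding $L=\bigcup_{\alpha>1}L_\alpha$ only means that at each time $n$ at least one of the three inequalities $\operatorname{Re}w_n-\operatorname{Re}z_n>1$, $\operatorname{Re}z_n>1$, $\operatorname{Re}w_n>1$ fails; an orbit that, say, violates only $\operatorname{Re}z_n>1$ avoids $L$ with no tendency whatsoever toward the very special configuration that $u_{n_k}\to 0$ demands, namely $\operatorname{Re}z_{n_k}\sim|z_{n_k}|$ and $\operatorname{Re}w_{n_k}\sim-|w_{n_k}|$. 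So the ``favourable times'' you need have no reason to recur, and your plan amounts to classifying all $L$-avoiding orbits in the regime where the exponential terms dominate --- an analysis the paper never performs and that is not needed. (Your first and third paragraphs, on the regularization $u^*$ and the maximum principle, reproduce \Cref{l4} of the paper rather than anything required for \Cref{l3}.)

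The paper's own proof is a short contrapositive computation that uses the assumed failure of the conclusion, not the structure of $L$-avoiding orbits. Suppose $(z_0,w_0)\in U\setminus A$ had $\limsup_{n}u_n<0$, i.e.\ $u_n(z_0,w_0)<-\alpha_1$ for all $n\geq N$ and some $\alpha_1>0$. Unwinding the definition of $u_n$ this reads $\operatorname{Re}w_n-\operatorname{Re}z_n>(\alpha_1-1)\,(|w_n|+|z_n|)$; since the point lies in the Baker domain $U$, $|z_n|+|w_n|\to\infty$, so the right-hand side diverges and for large $n$ one gets $\operatorname{Re}w_n-\operatorname{Re}z_n>\alpha_2$ for some $\alpha_2>1$, i.e.\ $(z_n,w_n)\in L_{\alpha_2}\subset L$, whence $(z_0,w_0)\in F^{-n}(L)\subset A$, a contradiction. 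In other words, the entire content is: a persistent \emph{negative} bound on $u_n$ plus divergence of the orbit forces entry into $L$; one never has to show directly that points outside $A$ achieve $u_{n_k}\to0$. (For fairness: the paper's write-up is itself terse --- the divergence step tacitly needs $\alpha_1>1$, and membership in $L_{\alpha_2}$ also requires checking $\operatorname{Re}z_n>1$ and $\operatorname{Re}w_n>1$, which is omitted --- but its mechanism is elementary and entirely different from the hard dynamical claim your proposal leaves open.)
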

\begin{proof}
We shall prove it by contradiction. Suppose that for some $(z_0,w_0)\in U\setminus A$, there exists $\alpha_1>0$ and $N\in\mathbb{N}$ such that $u_n(z_0,w_0) <-\alpha_1$, for every $n\geq N$, that is, $\operatorname{Re}w_n-\operatorname{Re}z_n\geq (\alpha_1-1)(|w_n|+|z_n|)$, for every $n\geq \mathbb{N}$.  Therefore, we can choose sufficiently large $n$ such that $\operatorname{Re}w_n-\operatorname{Re}z_n> \alpha_2$, where $\alpha_2>1$ (since $|w_n| $ and $|z_n|$ tends to infinity as $n$ tends to infinity).
This, in particular means that $(z_n,w_n)\in L_{\alpha_2}$, which further implies that $(z_0,w_0)\in A$, which is a contradiction.
\end{proof}
\begin{lemma}\label{l4}
The set $L$ is an absorbing domain.
\end{lemma}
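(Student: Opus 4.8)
The plan is to establish $U=A$ by the plurisubharmonic method, combining the sequence $u_n$ with \Cref{l2,l3}. I would begin with two structural observations. Since $F$ is holomorphic, hence continuous, and $L$ is open, each set $F^{-n}(L)$ is open; consequently $A=\bigcup_{n}F^{-n}(L)$ is open, and $\emptyset\neq L\subseteq A\subseteq U$ (a point of $A$ is eventually mapped into $L\subseteq U$, and $U$ is the invariant Fatou component carrying that forward orbit). Moreover $U$ is connected, being a Fatou component. Hence it suffices to show $U\setminus A=\emptyset$.

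Next I would pass to the limit function. Put $u:=\limsup_{n\to\infty}u_n$ on $U$ and let $u^{*}$ be its upper semicontinuous regularization, $u^{*}(x)=\limsup_{y\to x}u(y)$. The key analytic input is the classical regularization theorem: if a sequence of plurisubharmonic functions is locally uniformly bounded above, then the upper semicontinuous regularization of its upper limit is again plurisubharmonic. As each $u_n$ is plurisubharmonic and uniformly bounded above by $0$, this gives $u^{*}\in\mathcal{PSH}(U)$ together with $u^{*}\leq 0$ on $U$.

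Then I would insert the two lemmas. By \Cref{l2}, $\limsup_{n}u_n\leq -1$ on every compact subset of $A$, so $u\leq -1$ at each point of $A$; since $A$ is open, every $x\in A$ has a neighbourhood inside $A$, whence $u^{*}(x)=\limsup_{y\to x}u(y)\leq -1$. Thus $u^{*}\leq -1$ on $A$. By \Cref{l3}, on the other hand, $u=0$ on $U\setminus A$.

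Finally I would invoke the maximum principle. Suppose for contradiction that some $p\in U\setminus A$ exists. Then $u^{*}(p)\geq u(p)=0$, while $u^{*}\leq 0$ everywhere, so $u^{*}$ attains its global maximum $0$ at the interior point $p$ of the connected domain $U$. The maximum principle for plurisubharmonic functions then forces $u^{*}\equiv 0$ on $U$, contradicting $u^{*}\leq -1$ on the nonempty set $A$. Therefore $U\setminus A=\emptyset$, that is $U=A=\bigcup_{n}F^{-n}(L)$, which is exactly the assertion that $L$ is an absorbing domain. I expect the principal difficulty to lie in the plurisubharmonicity of $u^{*}$ --- correctly applying the regularization theorem under the local uniform upper bound --- and in transferring the bound $u\leq -1$ to $u^{*}$ on $A$, where the openness of $A$ is the essential ingredient.
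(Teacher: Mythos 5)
Your proof is correct and takes essentially the same approach as the paper: the same limit function $u=\limsup_n u_n$, the same upper semicontinuous regularization $u^*$ made plurisubharmonic via the Klimek regularization theorem, and the same use of \Cref{l2,l3} to get $u^*\leq -1$ on $A$ and $u=0$ on $U\setminus A$. The only cosmetic difference is the endgame: you conclude via the maximum principle for plurisubharmonic functions on the connected domain $U$, whereas the paper invokes the sub-mean value property at a boundary point of $A$ --- two packagings of the same fact.
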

\begin{proof}
Suppose that $U\not=A$. Define $u(z)=\limsup\limits_{n\to\infty} u_n(z)$ and let $u^*$ be its upper semicontinuous regularization. Then by \cite[Proposition 2.9.17]{Klimek}, $u^*$ is plurisubharmonic. By \Cref{l2} and \Cref{l3}, the function $u^*$ is strictly negative on $A$ and identically equal to zero on $U\setminus A$, this contradicts the sub mean value property at boundary points of $A$.
\end{proof}
\section*{Acknowledgement}
I would like to thank my thesis advisor Professor Sanjay Kumar, Department of Mathematics, Deen Dayal Upadhyaya College, Delhi, for several fruitful discussions. 
The author is supported by the National Board for Higher Mathematics, India.

\end{document}